\title{Linear automorphisms of vertex operator algebras associated with formal 
changes of variable and Bernoulli-type numbers}
\author{Robert McRae}
\date{}
    \theoremstyle{definition}\newtheorem{rema}{Remark}[section]
    \theoremstyle{plain}\newtheorem{propo}[rema]{Proposition}
    \newtheorem{theo}[rema]{Theorem}
    \newtheorem{lemma}[rema]{Lemma}
    \newtheorem{corol}[rema]{Corollary}
    \theoremstyle{definition}
\begin{document}
\bibliographystyle{alpha}
\maketitle

\newcommand{\nordcirc}{\mbox{\tiny ${\circ\atop\circ}$}}
\numberwithin{equation}{section}

\begin{abstract}
 \noindent We study a certain linear automorphism of a vertex operator algebra 
induced by the formal change of variable $f(x)=e^x-1$ and describe 
examples showing how this relates the theory of vertex operator algebras to 
Bernoulli numbers, Bernoulli polynomial values, and related numbers. We give a 
recursion for such numbers derived using vertex operator relations, and study 
the Jacobi identity for modified vertex operators that was introduced in work 
of 
Lepowsky.
\end{abstract}

\setcounter{equation}{0}
\section{Introduction}

In \cite{Z}, Zhu introduced a natural linear automorphism of a vertex operator 
algebra associated with the formal change of variable
\begin{equation}
 e^x-1=x+\sum_{x\geq 2} \dfrac{1}{n!} x^n
\end{equation}
in the course of proving modularity of graded traces for vertex operator 
algebra modules. In fact, Huang showed more generally in the monograph \cite{H} 
that 
any formal change of variable
\begin{equation}
 f(x)=a_1 x+\sum_{n\geq 2} a_n x^n
\end{equation}
where $a_1\in\mathbb{C}^\times$ and $a_n\in\mathbb{C}$ induces a natural linear 
automorphism $\phi_f$ of a vertex operator algebra $V$, and he removed a 
technical 
assumption in \cite{Z} that $V$ be generated as a Virasoro algebra module by 
lowest weight vectors for the Virasoro algebra. Specifically, given 
$f(x)$, there are unique $A_j\in\mathbb{C}$ for $j\geq 1$ such that
\begin{equation}\label{sum}
 \mathrm{exp}\left(\sum_{j\geq 1} A_j 
x^{j+1}\dfrac{d}{dx}\right)a_1^{x\frac{d}{dx}}\cdot x=f(x),
\end{equation}
where for a derivation $\mathcal{D}$ of $\mathbb{C}[[x]]$ and 
$a\in\mathbb{C}^\times$,
\begin{equation}
 a^\mathcal{D}\cdot f(x)=a^\lambda f(x)
\end{equation}
if $f(x)$ is an eigenvector for $\mathcal{D}$ with eigenvalue $\lambda$. Recall 
\cite{FLM} that a vertex operator algebra $V$ admits an action of the Virasoro 
algebra, the essentially
unique central extension of the Lie algebra of derivations of 
$\mathbb{C}[x,x^{-1}]$. Then the linear automorphism $\varphi_f$ of $V$ is 
given by
\begin{equation}\label{linautsum}
 \varphi_f =\mathrm{exp}\left(-\sum_{j\geq 1} A_j L(j)\right) a_1^{-L(0)},
\end{equation}
where $L(j)$ is the Virasoro algebra element corresponding to the derivation 
$-x^{j+1}\frac{d}{dx}$ of $\mathbb{C}[x,x^{-1}]$; $\varphi_f$ is well defined 
because for any $v\in V$, $L(j)v=0$ for $j$ sufficiently large.

The linear automorphism $\varphi_f$ induces another vertex operator algebra 
structure on $V$: For $u,v\in V$,
\begin{equation}
 Y_f(u,x)v=\varphi_f (Y(\varphi_f^{-1}u,x)\varphi_f^{-1}v)
\end{equation}
 where $Y$ denotes the original vertex operator on $V$: for $u,v\in V$,
 \begin{equation}
  Y(u,x)v=\sum_{n\in\mathbb{Z}} u_n v\,x^{-n-1}
 \end{equation}
 with $u_n\in\mathrm{End}\,V$. The 
new vertex operator $Y_f$ defines a vertex operator algebra structure on $V$ 
that is isomorphic to the original one, with $\varphi_f$ as the isomorphism.
In the case that $f(x)=\frac{1}{a}(e^{ax}-1)$ for $a\in\mathbb{C}^\times$, 
$Y_f$ 
has a particularly simple formula (\cite{Z}, \cite{L}, \cite{H}):
\begin{equation}
 Y_f(u,x)=Y(f'(x)^{L(0)}u,f(x))
\end{equation}
for any $u\in V$. In fact this formula holds for any $f(x)$ if $u$ is a lowest 
weight vector for the Virasoro algebra, that is, $L(0)v=nv$ for some 
$n\in\mathbb{Z}$ and $L(j)u=0$ for $j>0$.

In this paper, we will be particularly concerned with the change of variable 
$f(x)=e^x-1$. One reason is that this function is closely related to the 
Bernoulli numbers and polynomials, as is well known (cf. \cite{IR}). The 
Bernoulli numbers $B_j$ can be defined by the generating function
\begin{equation}
 \dfrac{1}{e^x-1}=\sum_{j\geq 0} \dfrac{B_j}{j!} x^{j-1},
\end{equation}
and more generally, the Bernoulli polynomials $B_n(t)$ are defined by
\begin{equation}
 \dfrac{e^{tx}}{e^x-1}=\sum_{j\geq 0} \dfrac{B_j(t)}{j!}x^{j-1}.
\end{equation}
Then $B_1=-\frac{1}{2}$, and if $j$ is odd and $j>1$, then 
$B_j=0$, because
\begin{equation}
 \dfrac{1}{e^x-1}+\dfrac{1}{2}
\end{equation}
is an odd function. We remark that the Bernoulli numbers $B_j$ are sometimes 
defined alternatively using the generating function
\begin{equation}
 \dfrac{e^x}{e^x-1}=\sum_{j\geq 0} \dfrac{B_j}{j!} x^{j-1},
\end{equation}
but since
\begin{equation}
 \dfrac{e^x}{e^x-1}-\dfrac{1}{e^x-1}=1,
\end{equation}
this changes only $B_1$, which now becomes $+\frac{1}{2}$. We shall use this 
second definition of the Bernoulli numbers in this paper; thus, we take 
$B_j=B_j(1)$ rather than $B_j(0)$.

In Section 2, we present a natural alternative approach to constructing 
$\varphi_f$ for a formal change of variable $f(x)=x+\sum_{n\geq 2} a_n x^n$ by 
defining $b_j\in\mathbb{C}$ for $j\geq 1$ such that
\begin{equation}
 \cdots \mathrm{exp}\left( b_3 x^4 \dfrac{d}{dx}\right) \mathrm{exp}\left( b_2 
x^3 \dfrac{d}{dx}\right)\mathrm{exp}\left( b_1 x^2 \dfrac{d}{dx}\right)\cdot x 
= 
f(x),
\end{equation}
that is, we break the exponential of the infinite sum into an infinite product 
of exponentials. By replacing $-x^{j+1}\frac{d}{dx}$ with $L(j)$, we thus 
obtain 
a linear automorphism $\varphi_f'$ of any vertex operator algebra $V$, which we 
shall show equals $\varphi_f$ using the Campbell-Baker-Hausdorff formula. In 
the 
case that 
$f(x)=e^x-1$, we will also show that $b_1=\frac{1}{2}$ and $b_j=0$ 
for $j>1$ odd, just as with the Bernoulli numbers. Although $b_j\neq B_j$ in 
general, this might suggest an interesting connection between these 
two series of numbers.

In Section 3 we take $f(x)=e^x-1$ and consider $Y_f$ acting on the vertex 
operator 
algebra $V$. If $u\in V$ is homogeneous of conformal weight $m\in\mathbb{Z}$, 
then
\begin{equation}
 Y_f(u,x)=\sum_{n\in\mathbb{Z}} u_n\, e^{mx}(e^x-1)^{-n-1},
\end{equation}
so Bernoulli polynomial values $B_j(m)$ appear when $n=0$. But this expression 
also contains coefficients of general series of the form
\begin{equation}\label{Bernoullitype}
 \dfrac{e^{mx}}{(e^x-1)^{n+1}},
\end{equation}
where $n\in\mathbb{Z}$; we shall call such coefficients Bernoulli-type numbers 
in this 
paper. We will derive a recursion formula for 
Bernoulli-type numbers in the case $m=1$ using certain vertex operator 
relations 
in the 
vertex operator algebra based on the $\mathfrak{sl}_2$ root lattice.

Another motivation for studying the change of variable $f(x)=e^x-1$ is the 
Jacobi identity for the modified vertex operators
\begin{equation}\label{homops}
 X(u,x)=Y(x^{L(0)}u,x),
\end{equation}
which were used extensively in \cite{FLM} for instance, and have the property 
that the 
coefficient of $x^n$ in $X(u,x)$ is an operator of weight $n$ with respect to 
the conformal weight grading of $V$.
In \cite{L2}, Lepowsky showed that these operators are related to the vertex 
operators $Y_f$ for $f(x)=e^x-1$ in that they satisfy the identity
\begin{align}\label{XJacobi}
 x_0^{-1}\delta\left( \frac{x_1-x_2}{x_0}\right) X(u,x_1)X(v,x_2) & -  
x_0^{-1}\delta\left( \frac{-x_2+x_1}{x_0}\right) X(v,x_2)X(u,x_1)\nonumber\\
& =  x_2^{-1}\delta\left( e^y \frac{x_1}{x_2}\right) 
X(Y_f(u,-y)v,x_2),
\end{align}
where $u,v\in V$ and $y=\mathrm{log}\left(1-\frac{x_0}{x_1}\right)$, so that 
$x_0=-x_1(e^y-1)$. Here and elsewhere in this paper $\mathrm{log}(1+X)$ for a 
formal expression $X$ denotes the formal series
\begin{equation}
 \mathrm{log}(1+X)=-\sum_{n\geq 1} \dfrac{(-1)^n}{n} X^n,
\end{equation}
provided this is well defined. See also \cite{M} and \cite{DLM} for generalizations of (\ref{XJacobi}).
To derive consequences of this Jacobi identity for $X$ operators, one needs to 
find the 
coefficients of powers of the formal variables. We will show in Section 4 that 
such coefficients involve formal residues of series of the 
form (\ref{Bernoullitype}); we obtain a formula for these 
residues and derive a formula for the coefficient of any power of $x_0$ in the 
Jacobi identity for modified vertex operators. As a consequence, we recover 
formulas from \cite{FLM}.

\paragraph{Acknowledgments}
I am very grateful to my advisor James Lepowsky for many helpful discussions 
and 
encouragement. I would also like to thank Yi-Zhi Huang, Stephen Miller, and Siddhartha Sahi for 
comments and suggestions regarding this work.

\setcounter{equation}{0}
\section{Linear automorphisms of vertex operator algebras as infinite products}

We take a formal change of variable
\begin{equation*}
 f(x)=x+\sum_{n\geq 2} a_n x^n
\end{equation*}
where $a_n\in\mathbb{C}$; for simplicity we take the coefficient of $x$ to be 
$1$, although this is not completely necessary. We do not assume that the power 
series $f(x)$ converges, since we are 
treating $x$ as a formal variable. We want to write
\begin{equation}\label{product}
 f(x)=\cdots \mathrm{exp}\left( b_3 x^4 \dfrac{d}{dx}\right) \mathrm{exp}\left( 
b_2 
x^3 \dfrac{d}{dx}\right)\mathrm{exp}\left( b_1 x^2 \dfrac{d}{dx}\right)\cdot x ,
\end{equation}
for $b_1, b_2, b_3,\ldots\in\mathbb{C}$ rather than use the form (\ref{sum}). 
Note that such an infinite product is a well-defined operator on, for example, 
$\mathbb{C}[[x]]$ since $x^{j+1}\dfrac{d}{dx}$ for $j\geq 1$ strictly increases 
degrees of monomials. We do not need a factor of the form 
$b_0^{x\, d/dx}$ because we have assumed that $a_1=1$.
\begin{propo}\label{productprop}
 The sequence $\left\lbrace b_j\right\rbrace $ for $j\geq 1$ making 
(\ref{product}) hold exists and is unique.
\end{propo}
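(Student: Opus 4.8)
The plan is to determine the coefficients $b_1, b_2, b_3, \ldots$ one at a time by matching the coefficient of each successive power of $x$ in (\ref{product}). The whole argument rests on the elementary observation that the derivation $x^{j+1}\frac{d}{dx}$ raises the degree of every monomial by exactly $j$, since $x^{j+1}\frac{d}{dx}\cdot x^k = k\,x^{k+j}$. For $j \geq 1$ this is a strict increase, which is what guarantees both that the infinite product is a well-defined operator on $\mathbb{C}[[x]]$ and that the matching can be carried out degree by degree.

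First I would introduce the partial products $g_0(x) = x$ and $g_k(x) = \exp(b_k x^{k+1}\frac{d}{dx})g_{k-1}(x)$ for $k \geq 1$, so that $f(x)$ is to be the formal limit of the $g_k(x)$; since $\exp(b_k x^{k+1}\frac{d}{dx})$ leaves the lowest-degree term untouched, each $g_k(x)$ is a power series with leading term $x$. The crucial structural fact, which I would establish next, is a stabilization-and-triangularity property: because $\exp(b_k x^{k+1}\frac{d}{dx})$ only alters coefficients in degrees $\geq k+1$, the coefficient of $x^m$ in $g_k(x)$ no longer changes once $k \geq m$, and hence the coefficient of $x^m$ in the infinite product equals the coefficient of $x^m$ in $g_{m-1}(x)$, determined by $b_1, \ldots, b_{m-1}$ alone. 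Writing $m = k+1$, a short computation then shows that the coefficient of $x^{k+1}$ in $g_k(x)$ equals the coefficient of $x^{k+1}$ in $g_{k-1}(x)$ plus $b_k$; here one uses that the leading coefficient of $g_{k-1}$ is $1$, and that the square and higher powers of $b_k x^{k+1}\frac{d}{dx}$ raise degrees by at least $2k$, so applied to the degree-$1$ leading term they land in degree $\geq 1+2k > k+1$ and cannot contribute in degree $k+1$.

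With this in hand the existence and uniqueness follow by a clean induction on $k$. For the base case, matching the coefficient of $x^2$ forces $b_1 = a_2$. For the inductive step, supposing $b_1, \ldots, b_{k-1}$ have been uniquely fixed so that $g_{k-1}(x)$ agrees with $f(x)$ through degree $k$, the matching condition in degree $k+1$ reads $a_{k+1} = (\text{coefficient of }x^{k+1}\text{ in }g_{k-1}(x)) + b_k$, which has the unique solution $b_k = a_{k+1} - (\text{coefficient of }x^{k+1}\text{ in }g_{k-1}(x))$; the right-hand side involves only the already-determined $b_1, \ldots, b_{k-1}$. This simultaneously produces the value of $b_k$ and shows it is forced, giving both existence and uniqueness.

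I expect the main obstacle to be the bookkeeping in the triangularity step, specifically verifying cleanly that in degree $k+1$ the operator $\exp(b_k x^{k+1}\frac{d}{dx})$ contributes exactly $b_k$ times the leading coefficient of $g_{k-1}$ and nothing from its higher powers. Once this affine-linear dependence on $b_k$ with leading coefficient $1$ is pinned down, the recursion is automatic. I note that this argument also yields an explicit recursion for the $b_j$ in terms of the $a_n$, which should be convenient when later specializing to $f(x) = e^x - 1$ to compute $b_1 = \frac12$ and $b_j = 0$ for odd $j > 1$.
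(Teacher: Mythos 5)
Your proof is correct and follows essentially the same route as the paper: both rest on the observation that $x^{j+1}\frac{d}{dx}$ raises degrees by $j\geq 1$, so the coefficient of $x^{n+1}$ depends only on $b_1,\ldots,b_n$ and is affine-linear in $b_n$ with unit coefficient, yielding a triangular recursion $b_n = a_{n+1} - (\text{terms in } b_1,\ldots,b_{n-1})$ that gives existence and uniqueness simultaneously. The only difference is presentational: the paper writes the degree-$(n+1)$ coefficient explicitly as a partition-indexed polynomial in the $b_j$, while you organize the same bookkeeping through partial products $g_k$ and a stabilization argument.
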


\begin{proof}
 Note that the coefficient of $x$ on both the left and right sides of 
(\ref{product}) is $1$.
 The operator $x^{j+1} \dfrac{d}{dx}$ on $\mathbb{C}[[x]]$ for $j\geq 1$ raises 
the degree of any monomial by $j$. Thus the coefficient of $x^{n+1}$, $n\geq 
1$, 
on the right side of (\ref{product}) is determined by the rightmost $n$ factors 
in the product. In fact, this coefficient of is
\begin{equation}
 \sum_{\substack{\mathrm{partitions}\,  p=\\ (1^{i_1},2^{i_2},\ldots 
,n^{i_n})\, 
\mathrm{of}\, n}} 
c_p b_1^{i_1} b_2^{i_2}\cdots b_n^{i_n},
\end{equation}
where $c_p$ is the coefficient of $x^{n+1}$ in the monomial
\begin{equation}
 \frac{1}{i_n !}\left( x^{n+1} \dfrac{d}{dx}\right)^{i_n}\cdots\frac{1}{i_2 
!}\left( x^{3} \dfrac{d}{dx}\right)^{i_2}\frac{1}{i_1 !}\left( x^{2} 
\dfrac{d}{dx}\right)^{i_1} x.
\end{equation}
Since the only partition of $n$ with $n$ as a part is $(n^1)$, we obtain the 
recursive relation 
\begin{equation}\label{recursion}
 b_n=a_{n+1}-\sum_{p\neq (n^1)} c_p b_1^{i_1} b_2^{i_2}\cdots b_n^{i_n}.
\end{equation}
 This recursion 
defines $b_n$ uniquely in terms of $b_1,\ldots, b_{n-1}$.
\end{proof}

Now, since $f(x)$ has the form $f(x)=x+x^2 g(x)$ with $g(x)\in\mathbb{C}[[x]]$, 
$f(x)$ has an inverse $f^{-1}(x)$ of the same form, such that 
$f(f^{-1}(x))=f^{-1}(f(x))=x$. In fact:

\begin{propo}\label{inverseprop}
 We have
\begin{equation}\label{inverseprod}
 f^{-1}(x)=\left( \mathrm{exp}\left( -b_1 x^2 \dfrac{d}{dx}\right) 
\mathrm{exp}\left( -b_2 x^3 \dfrac{d}{dx}\right)\mathrm{exp}\left( -b_3 x^4 
\dfrac{d}{dx}\right)\cdots\right)\cdot x,
\end{equation}
where the $b_j$ are as in (\ref{product}).
\end{propo}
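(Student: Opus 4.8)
The plan is to show that the product on the right-hand side of \eqref{inverseprod} is a genuine two-sided inverse of $f$, by using the fact that each exponentiated derivation acts as a substitution operator. The key observation is that for a derivation $\mathcal{D}=b\,x^{j+1}\frac{d}{dx}$, the operator $\mathrm{exp}(\mathcal{D})$ acting on $\mathbb{C}[[x]]$ is an automorphism that sends $x$ to some formal series $g(x)=x+\cdots$, and more importantly it acts on an arbitrary series $h(x)$ by composition: $\mathrm{exp}(\mathcal{D})\cdot h(x)=h(\mathrm{exp}(\mathcal{D})\cdot x)$. This is because $\mathrm{exp}(\mathcal{D})$ is an algebra automorphism of $\mathbb{C}[[x]]$ (the exponential of a derivation is multiplicative) that is continuous in the $x$-adic topology, so it is determined by its value on $x$ and commutes with formal composition.

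With this substitution principle in hand, I would argue as follows. Write $g(x)$ for the series obtained by applying the product $\cdots\mathrm{exp}(b_2 x^3\frac{d}{dx})\mathrm{exp}(b_1 x^2\frac{d}{dx})$ to $x$, so that $f(x)=g(x)$ by \eqref{product}. Each factor $\mathrm{exp}(b_j x^{j+1}\frac{d}{dx})$ is an automorphism of $\mathbb{C}[[x]]$ whose inverse is $\mathrm{exp}(-b_j x^{j+1}\frac{d}{dx})$, since the two derivations differ by a sign and $\mathrm{exp}(\mathcal{D})\mathrm{exp}(-\mathcal{D})=\mathrm{id}$. First I would establish that the full product defining $f$ is an invertible operator on $\mathbb{C}[[x]]$, with inverse obtained by reversing the order of the factors and negating each $b_j$; that is, the inverse operator is
\begin{equation*}
 \mathrm{exp}\left(-b_1 x^2\frac{d}{dx}\right)\mathrm{exp}\left(-b_2 x^3\frac{d}{dx}\right)\mathrm{exp}\left(-b_3 x^4\frac{d}{dx}\right)\cdots.
\end{equation*}
This is the standard fact that $(T_1 T_2\cdots)^{-1}=\cdots T_2^{-1}T_1^{-1}$, and the reversed infinite product is again well defined because each $x^{j+1}\frac{d}{dx}$ strictly raises degrees, so only finitely many factors affect any fixed coefficient.

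The final step is to convert the statement about inverse operators into the statement about the inverse function $f^{-1}$. Using the substitution principle, applying the operator that defines $f$ to the series $f^{-1}(x)$ yields $f(f^{-1}(x))$, the composition of the two series. So if I denote by $T$ the operator defining $f$ (so that $T\cdot x=f(x)$) and let $S$ be the reversed product above, then $T S=\mathrm{id}$ as operators, and applying both sides to $x$ gives $T\cdot(S\cdot x)=x$. By the substitution principle, $T\cdot(S\cdot x)=f(S\cdot x)$, so $f(S\cdot x)=x$, which identifies $S\cdot x$ as $f^{-1}(x)$ by uniqueness of the compositional inverse of a series of the form $x+x^2 g(x)$. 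Since $S\cdot x$ is precisely the right-hand side of \eqref{inverseprod}, this completes the proof.

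The main obstacle I anticipate is making the substitution principle $\mathrm{exp}(\mathcal{D})\cdot h(x)=h(\mathrm{exp}(\mathcal{D})\cdot x)$ fully rigorous for an \emph{infinite} product of exponentials, and verifying that the product and its reversal genuinely compose to the identity operator rather than merely agreeing coefficientwise on $x$. One must check that composition of formal power series is compatible with the $x$-adic continuity of these operators, and that reversing an infinite product of unipotent-type operators is legitimate; I would handle this by truncating modulo $x^{N+1}$ for each $N$, where only finitely many factors are relevant, reducing everything to the finite case where $(T_1\cdots T_k)^{-1}=T_k^{-1}\cdots T_1^{-1}$ is elementary, and then passing to the limit.
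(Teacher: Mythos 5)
Your proposal is correct and follows essentially the same route as the paper's own proof: the paper likewise defines the operator $L_f$ and the reversed, sign-negated product $L_f^*$, verifies $L_fL_f^*=L_f^*L_f=1$ by noting that on any fixed coefficient only finitely many factors act (your truncation modulo $x^{N+1}$), and uses that exponentials of derivations are continuous algebra automorphisms --- your substitution principle --- to convert the operator identity into $f(f^*(x))=f^*(f(x))=x$. One small slip worth fixing: the substitution principle reads $T\cdot h(x)=h(T\cdot x)$, so $T\cdot(S\cdot x)=f^*(f(x))$ rather than $f(S\cdot x)$ as you wrote; to conclude $f(S\cdot x)=x$ you should instead use $ST=\mathrm{id}$ via $f(S\cdot x)=S\cdot f(x)=S\cdot(T\cdot x)=x$ --- harmless here, since you established both operator identities and in any case a one-sided compositional inverse of a series $x+x^2g(x)$ is automatically two-sided.
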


\begin{proof}
The infinite product in (\ref{inverseprod}) is a well-defined operator on 
$\mathbb{C}[[x]]$ because the coefficient of $x^{n+1}$ on the right for any 
$n\geq 0$ is determined by the leftmost $n$ factors.
 Let $L_f$ denote the operator $\cdots\mathrm{exp}\left( b_2 x^3 
\dfrac{d}{dx}\right)\mathrm{exp}\left( b_1 x^2 \dfrac{d}{dx}\right)$ acting on 
$\mathbb{C}[[x]]$, and let $L_f^*$ denote the operator $ \mathrm{exp}\left( 
-b_1 x^2 \dfrac{d}{dx}\right) \mathrm{exp}\left( -b_2 x^3 
\dfrac{d}{dx}\right)\cdots$, which is also well defined, and set 
$f^*(x)=L_f^*(x).$ First observe that 
as operators on $ \mathbb{C}[[x]]$, $L_f L_f^*=L_f^* L_f=1$, $1$ denoting the 
identity. To see this, note that  $L_f L_f^*$ applied to $g(x)\in  
\mathbb{C}[[x]]$, agrees up to degree $n$ with
\begin{equation}\label{inverse}
 \mathrm{exp}\left( b_n x^{n+1} \dfrac{d}{dx}\right)\cdots\mathrm{exp}\left( 
b_1 
x^{2} \dfrac{d}{dx}\right)  \mathrm{exp}\left( -b_1 x^{2} 
\dfrac{d}{dx}\right)\cdots\mathrm{exp}\left( -b_n x^{n+1} \dfrac{d}{dx}\right)
\end{equation} 
applied to $g(x)$. However, (\ref{inverse}) is simply the identity operator; 
since $n$ is arbitrary, we obtain $L_f L_f^*=1$, and similarly we obtain $L_f^* 
L_f=1$. Now, since $x^{n+1} \dfrac{d}{dx}$ is a derivation of 
$\mathbb{C}[[x]]$, 
$\mathrm{exp}\left( b_n x^{n+1} \dfrac{d}{dx}\right) $ is an automorphism, and 
hence $L_f g(x)=g(L_f(x))$ for $g(x)\in  \mathbb{C}[[x]]$. Similarly, $L_f^* 
g(x)=g(L_f^*(x))$. Thus we obtain:
\begin{equation}
 f(f^*(x))=f(L_f^*(x))=L_f^* f(x)=L_f^* L_f (x)=x,
\end{equation}
and similarly $f^*(f(x))=x$. Thus $f^*(x)=f^{-1}(x)$ as desired.
\end{proof}

We can now define a linear automorphism $\widetilde{\varphi}_f$ of any vertex 
operator algebra $V$ as follows:
\begin{equation}
 \widetilde{\varphi}_f = \cdots \mathrm{exp}\left( -b_3 L(3)\right) 
\mathrm{exp}\left( -b_2 
L(2)\right)\mathrm{exp}\left( -b_1 L(1)\right)
\end{equation}
with inverse
\begin{equation}
 \widetilde{\varphi}_f^{-1}=\mathrm{exp}\left( b_1 L(1)\right) 
\mathrm{exp}\left( b_2 
L(2)\right)\mathrm{exp}\left( b_3 L(3)\right)\cdots. 
\end{equation}
These linear maps are well defined because for any $v\in V$, $L(j)v=0$ for $j$ 
sufficiently large. We now prove:
\begin{theo}\label{CBH}
 The linear automorphism $\widetilde{\varphi}_f$ equals the linear automorphism 
$\varphi_f$ defined in (\ref{linautsum}).
\end{theo}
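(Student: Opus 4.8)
The plan is to recognize that both $\varphi_f$ and $\widetilde{\varphi}_f$ are the images, under a single Lie algebra homomorphism, of one and the same element of a completed universal enveloping algebra; the whole computation then takes place inside the abstract Lie algebra and only at the very end is pushed into $\mathrm{End}\,V$. Concretely, let $\mathcal{L}^+$ be the complex Lie algebra with basis $\{e_j\}_{j\geq 1}$ and brackets $[e_m,e_n]=(m-n)e_{m+n}$. Since $m+n\geq 2$ whenever $m,n\geq 1$, no central term ever appears, so both $\rho_{\mathrm{der}}\colon e_j\mapsto \ell_j:=-x^{j+1}\frac{d}{dx}$ (acting as derivations of $\mathbb{C}[[x]]$) and $\rho_V\colon e_j\mapsto L(j)$ (acting on $V$) are genuine Lie algebra homomorphisms. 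Grading $\mathcal{L}^+$ by $\deg e_j=j$ and passing to the degree-completed enveloping algebra $\widehat{U}$, I can form both $\exp\left(-\sum_{j\geq 1}A_j e_j\right)$ and the infinite product $P:=\cdots\exp(-b_2 e_2)\exp(-b_1 e_1)$ as elements of $\widehat{U}$, matching (\ref{sum}) and (\ref{product}); convergence of $P$ is the same degree argument as in Proposition \ref{productprop}, since $e_j$ contributes only in degrees that are multiples of $j$.

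First I would use the Campbell--Baker--Hausdorff formula to rewrite the infinite product as a single exponential. Forming the partial products $P_n=\exp(-b_n e_n)\cdots\exp(-b_1 e_1)=\exp(Z_n)$ with $Z_n\in\mathcal{L}^+$ by finite BCH, the degree-$d$ component of $Z_n$ is unaffected by the factor $e_{n+1}$ once $n\geq d$, so $Z:=\lim_n Z_n$ exists in the completed Lie algebra $\widehat{\mathcal{L}}^+=\prod_{j\geq 1}\mathbb{C}e_j$ and $P=\exp(Z)$. The crucial feature here is that $Z$ lies in $\widehat{\mathcal{L}}^+$ and not merely in $\widehat{U}$; this is exactly what lets the eventual identity survive application of $\rho_V$.

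Next I would pin down $Z$ by working in the faithful representation $\rho_{\mathrm{der}}$. Applying $\rho_{\mathrm{der}}$ and evaluating at $x$, the product $P$ gives $f(x)$ by (\ref{product}), while $\exp\left(-\sum_j A_j e_j\right)$ gives $f(x)$ by (\ref{sum}) (recall $a_1=1$, so the factor $a_1^{x\,d/dx}$ there is trivial). Since each of $\rho_{\mathrm{der}}(Z)$ and $-\sum_j A_j\ell_j$ is a single derivation $\phi(x)\frac{d}{dx}$ with $\phi\in x^2\mathbb{C}[[x]]$, and the coefficients of such a $\phi$ are recovered recursively from the substitution $x\mapsto f(x)$ that it generates, the two derivations coincide. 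As $\rho_{\mathrm{der}}$ is injective on $\widehat{\mathcal{L}}^+$ (the $\ell_j$ being linearly independent derivations), this forces the Lie-algebra identity $Z=-\sum_{j\geq 1}A_j e_j$ in $\widehat{\mathcal{L}}^+$. Finally I would apply the homomorphism $\rho_V$, which is continuous for the grading because $L(j)v=0$ for $j$ large, so it commutes with both the infinite product and the exponential: $\widetilde{\varphi}_f=\rho_V(P)=\rho_V(\exp Z)=\exp(\rho_V Z)=\exp\left(-\sum_j A_j L(j)\right)=\varphi_f$, the last equality using again that $a_1^{-L(0)}=1$ in (\ref{linautsum}).

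I expect the main obstacle to be the bookkeeping that makes the passage from $\mathbb{C}[[x]]$ to $V$ legitimate: namely verifying that $Z$ genuinely lands in the Lie algebra $\widehat{\mathcal{L}}^+$ (so that the BCH identity, a statement about iterated brackets, transports verbatim under $\rho_V$) and that the degree completion is compatible with both representations. Once the problem is reframed as the single Lie-algebra identity $Z=-\sum_j A_j e_j$, the transfer to $V$ is purely formal; the real content is the BCH reorganization of the product together with the uniqueness of the generating derivation in the faithful representation $\rho_{\mathrm{der}}$.
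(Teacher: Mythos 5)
Your argument is correct, and it runs on the same engine as the paper's proof of Theorem \ref{CBH}: the Campbell--Baker--Hausdorff reorganization of the product of exponentials, the observation that $e_j\mapsto -x^{j+1}\frac{d}{dx}$ and $e_j\mapsto L(j)$ are two representations of the same centerless Lie algebra, and the uniqueness of the $A_j$ in (\ref{sum}) to pin down the resulting exponent. But the packaging is genuinely different. The paper never leaves the finite world: for each fixed $v$ it truncates to the factors with $j\leq N$ (where $L(j)v=0$ for $j>N$), applies finite BCH in formal variables $b_1',\ldots,b_N'$ to produce polynomials $C_j(b_1',\ldots,b_N')$ (citing Theorem 3.1 of \cite{BHL}), transfers that identity to $\mathbb{C}[[x]]$, specializes $b_j'\mapsto b_j$, and deduces $C_j(b_1,\ldots,b_N)=A_j$ for $j\leq N$, which suffices on $v$. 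You instead prove a single universal identity $Z=-\sum_{j\geq 1}A_je_j$ in the degree-completed Lie algebra, independent of $V$, and push it into $\mathrm{End}\,V$ by continuity; this buys a cleaner, module-independent statement at the price of the completion bookkeeping you flag, which the paper's per-vector truncation avoids entirely. Two small repairs: $Z_n$ lies in $\widehat{\mathcal{L}}^+$, not $\mathcal{L}^+$, since the BCH log of even two exponentials is an infinite series of brackets (convergent here only degreewise); and continuity of $\rho_V$ needs slightly more than ``$L(j)v=0$ for $j$ large,'' because a high-degree monomial $e_{j_1}\cdots e_{j_k}$ can have every $j_i$ small --- you should invoke the lower truncation of the weight grading of $V$ (a degree-$d$ monomial maps $V_{(m)}$ into $V_{(m-d)}=0$ for $d$ large), the same fact implicitly underlying the well-definedness of $\varphi_f$ in (\ref{linautsum}).
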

\begin{proof}
 We want to show that for any $v\in V$, 
$\widetilde{\varphi}_f(v)=\varphi_f(v)$. 
Take $N$ such that $L(j)v=0$ for $j>N$ and take formal variables $b_1',\ldots 
b_N'$. Set
 \begin{equation}
  \widetilde{\Phi}_f^{(n)}=\mathrm{exp}\left( -b_N' L(N)\right)\cdots 
\mathrm{exp}\left( -b_2' 
L(2)\right)\mathrm{exp}\left( -b_1' L(1)\right),
 \end{equation}
a well-defined power series. Then by the Campbell-Baker-Hausdorff formula (cf. 
Theorem 3.1 in \cite{BHL}), there are unique 
polynomials $C_j(b_1',\ldots,b_N')\in\mathbb{C}[b_1',\ldots,b_N']$ such that
\begin{equation}\label{CBH2}
 \widetilde{\Phi}_f^{(n)}=\mathrm{exp}\left(-\sum_{j\geq 1} 
C_j(b_1',\ldots,b_N') L(j)\right).
\end{equation}
Since $L(j)\mapsto -x^{j+1}\dfrac{d}{dx}$ defines a representation of the 
positive part of the Virasoro algebra,
\begin{equation}\label{explicitrep}
 \mathrm{exp}\left( b_N' x^{N+1}\dfrac{d}{dx}\right)\cdots \mathrm{exp}\left( 
b_2' 
x^3\dfrac{d}{dx}\right)\mathrm{exp}\left( b_1' x^2\dfrac{d}{dx}\right)\cdot 
x=\mathrm{exp}\left(\sum_{j\geq 1} C_j(b_1',\ldots,b_N') 
x^{j+1}\dfrac{d}{dx}\right)\cdot x.
\end{equation}
Since the coefficient of each power of $x$ in (\ref{explicitrep}) is a 
polynomial in $b_1',\ldots, b_N'$, we can replace $b_j'$ with the complex 
number 
$b_j$ and obtain
\begin{equation}\label{explicitrep2}
 \mathrm{exp}\left( b_N x^{N+1}\dfrac{d}{dx}\right)\cdots \mathrm{exp}\left( 
b_2 
x^3\dfrac{d}{dx}\right)\mathrm{exp}\left( b_1 x^2\dfrac{d}{dx}\right)\cdot 
x=\mathrm{exp}\left(\sum_{j\geq 1} C_j(b_1,\ldots,b_N) 
x^{j+1}\dfrac{d}{dx}\right)\cdot x.
\end{equation}
The left side of (\ref{explicitrep2}) agrees with $f(x)$ up to degree $N$, and 
so the right side does as well. But by the uniqueness of the complex numbers 
$A_j$ defined by (\ref{sum}), this means $A_j=C_j(b_1,\ldots,b_N)$ for $1\leq 
j\leq N$. We thus obtain
\begin{align}
 \widetilde{\varphi}_f(v) &  = \mathrm{exp}\left( -b_N L(N)\right)\cdots 
\mathrm{exp}\left( -b_2 
L(2)\right)\mathrm{exp}\left( -b_1 L(1)\right)\cdot v\nonumber\\
& =\mathrm{exp}\left(-\sum_{j\geq 1} C_j(b_1,\ldots,b_N) L(j)\right)\cdot 
v\nonumber\\
& = \mathrm{exp}\left(-\sum_{j\geq 1} A_j L(j)\right)\cdot v\nonumber\\
& =\varphi_f(v)
\end{align}
since $L(j)v=0$ for $j>N$. Since $v$ was arbitrary, 
$\widetilde{\varphi}_f=\varphi_f$.
\end{proof}
\begin{rema}
 To show the existence of the polynomials $C_j$ in the proof of Theorem 
\ref{CBH}, we are using the easier half of Theorem 3.1 in \cite{BHL}, which 
follows from the Campbell-Baker-Hausdorff formula. The deeper half of Theorem 
3.1 in \cite{BHL} shows that the exponential of a sum such as (\ref{CBH2}) 
can be broken into a product of exponentials.
\end{rema}

Taking $f(x)=\frac{1}{a}(e^{ax}-1)$ where $a\in\mathbb{C}^\times$, we prove the 
following curious result:

\begin{propo}
 Suppose $f(x)=\frac{1}{a} (e^{ax}-1)$, $a\in\mathbb{C}^\times $. Then 
$b_1=\frac{a}{2}$ and $b_j=0$ for $j>1$ odd.
\end{propo}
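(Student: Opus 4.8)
The plan is to exploit a symmetry of the product decomposition (\ref{product}) under $x\mapsto -x$, mirroring the way the vanishing of the odd Bernoulli numbers $B_j$ $(j>1)$ comes from the oddness of $\frac{1}{e^x-1}+\frac12$. First note that $b_1=\frac a2$ is immediate from the base case of the recursion (\ref{recursion}): it gives $b_1=a_2$, and $f(x)=\frac1a(e^{ax}-1)=x+\frac a2 x^2+\cdots$ has $a_2=\frac a2$. The real content is the vanishing of $b_j$ for odd $j>1$, and I would obtain both statements together from the symmetry argument below.

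Write $D_j=x^{j+1}\frac{d}{dx}$ and let $\theta$ be the algebra automorphism of $\mathbb{C}[[x]]$ with $\theta(g)(x)=g(-x)$, so $\theta=\theta^{-1}$. A direct computation gives $\theta\,D_j\,\theta=(-1)^j D_j$ for every $j\geq 1$, hence $\theta\exp(b_j D_j)\theta=\exp\big((-1)^j b_j D_j\big)$. Conjugating the operator $L_f=\cdots\exp(b_2 D_2)\exp(b_1 D_1)$ from the proof of Proposition \ref{inverseprop} therefore produces the operator with the sign-flipped sequence $\{(-1)^j b_j\}$, which is again in the ascending form (\ref{product}). Using that $\theta$ is an algebra automorphism with $\theta(x)=-x$, I would identify the function it produces as
\begin{equation*}
 \tilde f(x):=\theta\big(L_f(-x)\big)=\theta(-f)=-f(-x),
\end{equation*}
so that $\{(-1)^j b_j\}$ is precisely the sequence assigned by Proposition \ref{productprop} to the change of variable $\tilde f(x)=-f(-x)$. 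As in Propositions \ref{productprop} and \ref{inverseprop}, all these manipulations are justified degree by degree on truncations.

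The second ingredient is a feature special to the exponential. For $f(x)=\frac1a(e^{ax}-1)$ one has $1+af(x)=e^{ax}$, whence
\begin{equation*}
 \tilde f(x)=-f(-x)=\frac1a\left(1-e^{-ax}\right)=\frac{f(x)}{1+af(x)}=\mu\big(f(x)\big),
\end{equation*}
where $\mu(y)=\frac{y}{1+ay}=\exp\!\big(-a\,y^2\tfrac{d}{dy}\big)\cdot y$ is the time-$(-a)$ flow of $D_1$. Post-composing $f$ with $\mu$ corresponds to right-multiplying $L_f$ by $\exp(-a D_1)$: since $L_f$ is an algebra automorphism with $L_f(x)=f$, one checks $L_f\exp(-aD_1)\cdot x=\mu(f(x))=\tilde f(x)$. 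Because $\exp(b_1 D_1)\exp(-aD_1)=\exp\big((b_1-a)D_1\big)$, the operator $L_f\exp(-aD_1)$ is again in standard ascending form, now with coefficient $b_1-a$ in degree one and $b_j$ unchanged for $j\geq 2$.

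Finally I would invoke the uniqueness in Proposition \ref{productprop}. The function $\tilde f$ now has two product decompositions, one with coefficient sequence $\{(-1)^j b_j\}$ and one with sequence $(b_1-a,\,b_2,\,b_3,\dots)$, so equating them termwise yields $-b_1=b_1-a$, i.e.\ $b_1=\frac a2$, and $(-1)^j b_j=b_j$ for $j\geq 2$, i.e.\ $b_j=0$ whenever $j$ is odd. I expect the main obstacle to be the composition bookkeeping in the third step: one must verify carefully that \emph{post}-composition by $\mu$ (rather than precomposition) corresponds to multiplying $L_f$ on the \emph{right} by $\exp(-aD_1)$, and confirm that the resulting product still has the single-generator ascending shape required for the uniqueness statement to apply. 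The remaining verifications, such as $\theta D_j\theta=(-1)^j D_j$ and the flow identity for $\mu$, are routine.
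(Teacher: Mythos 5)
Your proof is correct, and it takes a genuinely different route from the paper's. The paper gets $b_1=\frac{a}{2}$ directly from the recursion (\ref{recursion}) and then works with the inverse series: by Proposition \ref{inverseprop} it reduces the claim to showing that $\mathrm{exp}\left(b_1 x^2\frac{d}{dx}\right)\cdot f^{-1}(x)=\frac{1}{a}\mathrm{log}\left(1+a\frac{x}{1-b_1x}\right)$ (via Proposition 8.3.10 of \cite{FLM}) is an odd series, and then it constructs an auxiliary factorization of an arbitrary odd change of variable using only the even-index operators $x^3\frac{d}{dx}, x^5\frac{d}{dx},\dots$, so that uniqueness in Proposition \ref{productprop} forces $b_j=0$ for odd $j>1$. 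You instead stay with $f$ itself and play off two symmetries of the product (\ref{product}): conjugation by $\theta: x\mapsto -x$, which shows in complete generality that the coefficient sequence of $-f(-x)$ is $\{(-1)^j b_j\}$, against the identity $-f(-x)=\mu(f(x))$ special to $f(x)=\frac{1}{a}(e^{ax}-1)$, where $\mu$ is the time-$(-a)$ flow of $x^2\frac{d}{dx}$, which yields the second decomposition $(b_1-a,\,b_2,\,b_3,\dots)$; uniqueness then delivers $b_1=\frac{a}{2}$ and the odd vanishing simultaneously. Your flagged worry about the composition bookkeeping is resolved exactly as you suspect: $L_f\exp(-aD_1)\cdot x=L_f(\mu(x))=\mu(L_f(x))=\mu(f(x))$, using the substitution property $L_f\,g(x)=g(L_f(x))$ established in the proof of Proposition \ref{inverseprop}, and absorbing $\exp(-aD_1)$ into $\exp(b_1D_1)$ preserves the ascending shape since $D_1$ commutes with itself. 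As for what each approach buys: yours isolates a reusable general lemma (parity-flipping the sequence corresponds to $g(x)\mapsto -g(-x)$, so for instance an odd $f$ has all odd-index coefficients zero) and avoids both the inverse series and the even-only existence argument, while also recovering $b_1=\frac{a}{2}$ without invoking the recursion; the paper's route exposes the slightly broader principle that $b_j=0$ for odd $j>1$ whenever $\mathrm{exp}\left(b_1x^2\frac{d}{dx}\right)\cdot f^{-1}(x)$ happens to be odd, not just for the exponential family. Both proofs rest on the same two pillars: uniqueness in Proposition \ref{productprop} and the closed-form flow of $x^2\frac{d}{dx}$ from \cite{FLM}.
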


\begin{proof}
 The fact that $b_1=\frac{a}{2}$ is an immediate consequence of 
(\ref{recursion}). Now, $f^{-1}(x)=\frac{1}{a}\mathrm{log}\left( 1+ax\right)$, 
so by Proposition \ref{inverseprop},
\begin{equation}\label{log}
 \left( \mathrm{exp}\left( -b_2 x^3 \dfrac{d}{dx}\right)\mathrm{exp}\left( -b_3 
x^4 \dfrac{d}{dx}\right)\cdots\right)\cdot x=\mathrm{exp}\left( b_1 x^2 
\dfrac{d}{dx}\right)\cdot \frac{1}{a}\mathrm{log}\left( 1+ax\right)
\end{equation}
 Now, the right side of (\ref{log}) is $\frac{1}{a}\mathrm{log}\left( 
1+a\frac{x}{1-b_1 x}\right)$ by Proposition 8.3.10 in \cite{FLM}. We show that 
this expression contains only odd powers of $x$ by observing that
\begin{align}
 \frac{1}{a}\mathrm{log}\left( 1+a\frac{x}{1-b_1 
x}\right)+\frac{1}{a}\mathrm{log}\left( 1+a\frac{-x}{1+b_1 x}\right)=\nonumber\\
\frac{1}{a}\mathrm{log}\left(\left( 1+\frac{ax}{1-\frac{a}{2} x}\right) \left( 
1-\frac{ax}{1+\frac{a}{2} x}\right) \right) =0.
\end{align}
This follows because
\begin{equation}
 \left( 1+\frac{ax}{1-\frac{a}{2} x}\right) \left( 1-\frac{ax}{1+\frac{a}{2} 
x}\right)=1+\frac{ax\left( 1+\frac{a}{2} x\right)-ax\left( 
1-\frac{a}{2}x\right) 
-a^2 x^2 }{1-\frac{a^2}{4} x^2}=1.
\end{equation}
As in Proposition \ref{productprop}, it is easy to see that we can find unique 
$c_2, c_4,\ldots $ such that
\begin{equation}\label{evenproduct}
  \left( \mathrm{exp}\left( -c_2 x^3 \dfrac{d}{dx}\right)\mathrm{exp}\left( 
-c_4 
x^5 \dfrac{d}{dx}\right)\cdots\right)\cdot x=g(x)
\end{equation}
for any formal change of variable  $g(x)$ that has only odd powers of $x$. 
(Note 
that if $n$ is even, $x^{n+1} 
\dfrac{d}{dx}$ raises powers of $x$ by the even integer $n$, so the left side 
of 
(\ref{evenproduct}) has only odd powers of $x$.)

This shows that we can find $c_2,c_4,\ldots$ such that
\begin{equation}
 \left( \mathrm{exp}\left( -b_1 x^2 \dfrac{d}{dx}\right)\mathrm{exp}\left( -c_2 
x^3 \dfrac{d}{dx}\right)\mathrm{exp}\left( -c_4 x^5 
\dfrac{d}{dx}\right)\cdots\right)\cdot x= \frac{1}{a}\mathrm{log}\left( 
1+ax\right),
\end{equation}
and thus
\begin{equation}
 \cdots \mathrm{exp}\left( c_4 x^5 \dfrac{d}{dx}\right) \mathrm{exp}\left( c_2 
x^3 \dfrac{d}{dx}\right)\mathrm{exp}\left( b_1 x^2 \dfrac{d}{dx}\right)\cdot x 
=\frac{1}{a} (e^{ax}-1).
\end{equation}
By the uniqueness of the coefficients in Proposition \ref{productprop},
$b_j=c_j$ for $j$ even and $b_j=0$ for $j>1$ odd.
\end{proof}

\begin{rema}
 Suppose we take $a=1$ and consider $f(x)=e^x-1$; if we define $b_n'=n!\cdot 
b_n$, the first few non-zero $b_n'$ are given by
 \begin{equation*}
b_1'=\dfrac{1}{2},\,\,\,b_2'=-\dfrac{1}{6},\,\,\,b_4'=-\dfrac{1}{20},\,\,\,
b_6'=\dfrac{5}{84},\,\,\,b_8'=-\dfrac{7}{24},\,\,\,b_{10}'=\dfrac{35}{22},\,\,\,
b_{12}'=-\dfrac{4279}{312},\,\,\,b_{14}'=\dfrac{3003}{16}.
 \end{equation*}
 Based on the experimental evidence, we conjecture that for $n>1$, the sign of 
$b_{2n}$ alternates.
\end{rema}

\setcounter{equation}{0}
\section{A recursion formula for Bernoulli-type numbers}

In this section and the next we will study Bernoulli-type numbers, the 
coefficients of series of the form
\begin{equation}
 \dfrac{e^{mx}}{(e^x-1)^n} =\sum_{j\geq 0} q^{(m,n)}_{-n+j} x^{-n+j},
\end{equation}
where $m,n\in\mathbb{Z}$. Note that for $n=1$, 
\begin{equation}
 q^{(m,1)}_{-n+j}=\frac{B_j(m)}{j!},
\end{equation}
where the $B_j(m)$ are Bernoulli polynomial values; for $m=1$ we get Bernoulli 
numbers $B_j$ (with $B_1=\frac{1}{2}$ rather than $-\frac{1}{2}$). In this 
section, we will take $m=1$ and derive 
a recursion formula for the the values $q^{(1,n)}_{-n+j}$ for any 
$n\in\mathbb{Z}$ and $j\geq 0$ using vertex operator commutation relations. In 
Section 4 we will consider arbitrary $m\in\mathbb{Z}$ but focus on the 
coefficients of $x^{-1}$.

We will take the vertex operator algebra $V$ based on the $\mathfrak{sl}_2$ 
root 
lattice (see \cite{FLM} for the construction of vertex operator algebras from 
lattices). The weight $1$ subspace of $V$ is spanned by the vectors 
$\iota(e_\alpha)$, $\alpha(-1)\mathbf{1}$, and $\iota(e_{-\alpha})$, where 
$\alpha$ represents the positive root of $\mathfrak{sl}_2$. The vertex 
operators 
for these vectors span a copy of the affine Lie algebra 
$\widehat{\mathfrak{sl}_2}$ acting on $V$, so that in particular we have the 
commutation relations:
\begin{equation}\label{6}
 \left[ \alpha(m),\iota(e_{\alpha})_n\right]=2\iota(e_{\alpha})_{m+n} 
\end{equation}
and
\begin{equation}\label{7}
 \left[ \alpha(m),\alpha(n)\right] =m\delta_{m+n,0}
\end{equation}
for any $m,n\in\mathbb{Z}$. We take the formal change of variable $f(x)=e^x-1$ 
with the associated vertex operators
\begin{equation}
 Y_f(u,x)=Y(e^{xL(0)}u,e^x-1),
\end{equation}
which define a vertex operator algebra structure isomorphic to $V$.

We can now derive the following recursive formula for $q^{(1,n)}_{-n+j}$:
\begin{theo}\label{qrecursion}
For any $n\in\mathbb{Z}$, $q^{(1,n)}_{-n}=1$, and for $j> 0$,
\begin{equation}\label{8}
 q^{(1,n)}_{-n+j}=\frac{1}{j}\left( \frac{B_j}{j!}(n-j-1)+\sum_{i=1}^{j-1} i\, 
q^{(1,n+i-j)}_{j-n} q^{(1,-n-i+j+2)}_{n-2}\right) .
\end{equation}
\end{theo}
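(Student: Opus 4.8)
Throughout I will write $S^{(n)}(x)=\frac{e^x}{(e^x-1)^n}=\sum_{j\geq 0}q^{(1,n)}_{-n+j}x^{-n+j}$ for the Bernoulli-type generating series. Since $\iota(e_\alpha)$ is a weight $1$ lowest weight vector for the Virasoro algebra, the simple formula for $Y_f$ applies and, after reindexing, gives
\begin{equation}
 Y_f(\iota(e_\alpha),x)=e^xY(\iota(e_\alpha),e^x-1)=\sum_{n\in\mathbb{Z}}\iota(e_\alpha)_{n-1}\,S^{(n)}(x),
\end{equation}
so $S^{(n)}$ is exactly the coefficient of the mode $\iota(e_\alpha)_{n-1}$; the same holds with $\alpha$ replaced by $-\alpha$ and by $\alpha(-1)\mathbf{1}$. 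The base case $q^{(1,n)}_{-n}=1$ is immediate, since $(e^x-1)^n=x^n(1+O(x))$ forces $S^{(n)}(x)=x^{-n}(1+O(x))$.

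The structural clue for the recursion is that the two series occurring in the bilinear sum, $S^{(n+i-j)}$ and $S^{(-n-i+j+2)}$, have superscripts summing to $2$; this is the signature of a product of \emph{two} weight $1$ currents, namely $Y_f(\iota(e_\alpha),x)$ and $Y_f(\iota(e_{-\alpha}),x)$. I would therefore evaluate the mixed bracket $[\iota(e_\alpha)_m,Y_f(\iota(e_{-\alpha}),x)]$ (and, summed against $S^{(m+1)}(x_1)$, the full bracket $[Y_f(\iota(e_\alpha),x_1),Y_f(\iota(e_{-\alpha}),x_2)]$) in two compatible ways. Expanding $Y_f(\iota(e_{-\alpha}),x)=\sum_k\iota(e_{-\alpha})_k\,S^{(k+1)}(x)$ and using the affine $\widehat{\mathfrak{sl}_2}$ relations—relation (\ref{6}) together with the identity-vector and Heisenberg central terms governed by (\ref{7})—collapses the bracket onto the single current $Y_f(\alpha,x)=\sum_r\alpha(r)S^{(r+1)}(x)$ plus a scalar, of the shape
\begin{equation}
 [\iota(e_\alpha)_m,Y_f(\iota(e_{-\alpha}),x)]=\epsilon(e^x-1)^m\,Y_f(\alpha,x)+\epsilon\,m\,e^x(e^x-1)^{m-1},
\end{equation}
for a cocycle sign $\epsilon$. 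Reading off the coefficient of a fixed operator $\alpha(r)$ on the collapsed side produces single Bernoulli-type numbers $q^{(1,r+1)}$, whereas keeping the bilinear mode expansion of $Y_f(\iota(e_\alpha),x_1)Y_f(\iota(e_{-\alpha}),x_2)$ produces the bilinear expressions $q^{(1,\bullet)}_\bullet q^{(1,\bullet)}_\bullet$; the summation index $i$ records the pairing of modes, the factor $2$ and the index shift of (\ref{6}) supply the weight $i$, and (\ref{7}) contributes the lone $\tfrac{B_j}{j!}$ term.

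To finish I would extract the coefficient of a fixed monomial $x^{-n+j}$, using that the degree operator $x\frac{d}{dx}+n$ has eigenvalue $j$ on $x^{-n+j}$, which is what produces the factor $j$ on the left and the accompanying weight $(n-j-1)$; the free integer $m$ ranges over all of $\mathbb{Z}$ and is what lets $n$ be arbitrary. The hard part will be the reduction of the genuinely quadratic operator product to a scalar identity. One must pair against $\iota(e_{-\alpha})$ (or the vacuum) and then track, through the substitution $z=e^x-1$ and the binomial expansions of $(e^x-1)^{-m-1}$, exactly how reordering the Heisenberg modes via (\ref{6}) redistributes the two mode indices so that their superscripts sum to $2$ and the contraction weight becomes $i$, all the while isolating the finite sum $\sum_{i=1}^{j-1}$ from the delta-function (locality) terms inherent in the commutator. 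Matching the two evaluations coefficient by coefficient in $x$ should then yield (\ref{8}); I expect the only genuine subtlety beyond bookkeeping to be the $B_1$-convention, which affects the $j=1$ term.
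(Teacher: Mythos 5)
Your setup is essentially the paper's: the paper also derives the recursion from the affine $\widehat{\mathfrak{sl}_2}$ relations transported to the $[\,\cdot\,]$-modes of $Y_f$, and your single bracket $[\iota(e_\alpha)_{[m]},\iota(e_{-\alpha})_{[n]}]$ packages the same information as the paper's two relations (the current component reproduces $[\alpha[m],\iota(e_\alpha)_{[n]}]=2\iota(e_\alpha)_{[m+n]}$ and the central component reproduces $[\alpha[m],\alpha[n]]=m\delta_{m+n,0}$). One caution: your \emph{mixed} bracket $[\iota(e_\alpha)_m,Y_f(\iota(e_{-\alpha}),x)]$, with an unmodified mode on the left, yields only tautologies (both sides expand to identical expressions in the original modes); the quadratic identities among the $q^{(1,n)}_{-n+j}$ appear only when \emph{both} factors are $[\,\cdot\,]$-modes, each expanded via $v_{[k]}=\sum_j q^{(1,j+1)}_{-k-1}v_j$ and compared, using linear independence of the operators $\alpha(r)$, $\iota(e_\alpha)_r$, and $1$ (no pairing against vectors, and no locality or delta-function subtleties: the sums are finite simply because $q^{(1,k+1)}_{-m-1}=0$ for $k<m$ and the companion factor vanishes for $k>m+j$).

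The genuine gap is the finishing step, which you hedge and then get wrong in two places. Setting $m+n=-j$, the current component at $\alpha(0)$ gives $\sum_{k=m}^{m+j} q^{(1,k+1)}_{-m-1}q^{(1,-k+1)}_{m+j-1}=q^{(1,1)}_{j-1}=\frac{B_j}{j!}$, while the central component gives $\sum_{k=m}^{m+j} k\,q^{(1,k+1)}_{-m-1}q^{(1,-k+1)}_{m+j-1}=m\delta_{j,0}$. So the $\frac{B_j}{j!}$ term comes from the \emph{current} ($\alpha(0)$) part, not, as you claim, from the central/Heisenberg part governed by (\ref{7}) --- the latter contributes only $m\delta_{j,0}$, which vanishes for $j>0$. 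More importantly, each of these identities contains \emph{two} top-order unknowns, the boundary terms at $k=m$ and $k=m+j$ (their partners $q^{(1,m+1)}_{-m-1}$ and $q^{(1,-m-j+1)}_{m+j-1}$ equal $1$), so neither identity alone determines $q^{(1,n)}_{-n+j}$; the paper isolates $q^{(1,-m+1)}_{m+j-1}$ and $q^{(1,m+j+1)}_{-m-1}$ and inverts the $2\times 2$ system with matrix $\left(\begin{smallmatrix} 1 & 1\\ m & m+j\end{smallmatrix}\right)$, whose determinant $j$ is the true source of the prefactor $\frac{1}{j}$, with $m=n-j-1$ supplying the weight on $\frac{B_j}{j!}$ after substituting $n=m+j+1$. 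Your proposed mechanism for the factor $j$ --- the eigenvalue of the degree operator $x\frac{d}{dx}+n$ on $x^{-n+j}$ --- plays no role and would not produce the recursion; without the two-identity linear solve, the argument does not close, since induction on $j$ (valid because both factors in the interior sum have strictly smaller $j$-index $j-i$ and $i$) only controls the terms with $1\leq i\leq j-1$.
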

\begin{proof}
 Since $e^x/(e^x-1)^n=x^{-n}+\ldots $, it is clear that $q^{(1,n)}_{-n}=1$, and 
note also that $q^{(1,n)}_k=0$ for $k<-n$. Since the $Y_f$ vertex operator 
algebra structure on $V$ is isomorphic to the original structure, then denoting 
by $v_{[n]}$ the coefficient of $x^{-n-1}$ in $Y_f(v,x)$, we obtain from 
(\ref{6}) and (\ref{7}) the relations
\begin{equation}\label{9}
 \left[ \alpha[m],\iota(e_{\alpha})_{[n]}\right] =2\iota(e_{\alpha})_{[m+n]}
\end{equation}
and
\begin{equation}\label{10}
 \left[ \alpha[m],\alpha[n]\right] =m\delta_{m+n,0}.
\end{equation}
For $v=\iota(e_{\alpha})$ or $\alpha(-1)\mathbf{1}$, we have
\begin{equation}\label{11}
 Y(v,x)  =  \sum_{j\in\mathbb{Z}} v_j \frac{e^x}{(e^x-1)^{j+1}}
         =  \sum_{k\in\mathbb{Z}} \left( \sum_{j\in\mathbb{Z}} 
q^{(1,j+1)}_{-k-1} v_j\right) x^{-k-1}. 
\end{equation}
Thus we obtain
\begin{align}\label{12}
 \left[ \alpha[m],\iota(e_{\alpha})_{[n]}\right] & = \left[ 
\sum_{k\in\mathbb{Z}} q^{(1,k+1)}_{-m-1} \alpha(k),\sum_{l\in\mathbb{Z}} 
q^{(1,l+1)}_{-n-1} \iota(e_{\alpha})_l\right] \nonumber\\
& =  \sum_{k,l\in\mathbb{Z}} q^{(1,k+1)}_{-m-1} q^{(1,l+1)}_{-n-1} 
(2\iota(e_{\alpha})_{k+l})\nonumber\\
& =  2\iota(e_{\alpha})_{[m+n]}\nonumber\\
& =  2\sum_{j\in\mathbb{Z}} q^{(1,j+1)}_{-m-n-1}\iota(e_{\alpha})_j,
\end{align}
and hence
\begin{equation}\label{13}
 \sum_{j\in\mathbb{Z}} \left( q^{(1,j+1)}_{-m-n-1}-\sum_{k+l=j} 
q^{(1,k+1)}_{-m-1} q^{(1,l+1)}_{-n-1}\right) \iota(e_{\alpha})_j=0.
\end{equation}
Since each $\iota(e_{\alpha})_j$ is an operator of degree $j$ and acts 
non-trivially on $V$, (\ref{13}) can hold only if for each $j\in\mathbb{Z}$,
\begin{equation}\label{14}
 q^{(1,j+1)}_{-m-n-1}=\sum_{k+l=j} q^{(1,k+1)}_{-m-1} 
q^{(1,l+1)}_{-n-1}=\sum_{k\in\mathbb{Z}} q^{(1,k+1)}_{-m-1} 
q^{(1,j-k+1)}_{-n-1}.
\end{equation}
In fact, the sum on the right side of (\ref{14}) is finite since 
$q^{(1,k+1)}_{-m-1}=0$ when $-m-1<-k-1$, that is, $k<m$, and 
$q^{(1,j-k+1)}_{-n-1}=0$ when $-n-1<-j+k-1$, that is, $k>-n+j$. We will use the 
case $j=0$; thus we have
\begin{equation}\label{15}
 \sum_{k=m}^{-n} q^{(1,k+1)}_{-m-1} q^{(1,-k+1)}_{-n-1}=q^{(1,1)}_{-m-n-1}.
\end{equation}
Now for $j\geq 0$, set $m+n=-j$, so that
\begin{equation}\label{16}
 \sum_{k=m}^{m+j} q^{(1,k+1)}_{-m-1} 
q^{(1,-k+1)}_{m+j-1}=q^{(1,1)}_{j-1}=\frac{B_j}{j!}.
\end{equation}

We can also argue similarly using (\ref{7}) and (\ref{10}) that
\begin{equation}\label{17}
 \sum_{k,l\in\mathbb{Z}} q^{(1,k+1)}_{-m-1} q^{(1,l+1)}_{-n-1} 
k\delta_{k+l,0}=m\delta_{m+n,0},
\end{equation}
and so for $m\in\mathbb{Z}$,
\begin{equation}\label{18}
 \sum_{k\in\mathbb{Z}} k\, q^{(1,k+1)}_{-m-1} q^{(1,-k+1)}_{-n-1} 
=m\delta_{m+n,0}
\end{equation}
Again setting $m+n=-j$ for $j\geq 0$, we obtain
\begin{equation}\label{19}
 \sum_{k=m}^{m+j} k\, q^{(1,k+1)}_{-m-1} q^{(1,-k+1)}_{m+j-1} =m\delta_{j,0}.
\end{equation}
Since $q^{(1,m+1)}_{-m-1}=q^{(1,-m-j+1)}_{m+j-1}=1$, (\ref{16}) and (\ref{19}) 
give for $j>0$
\begin{align}\label{20}
 q^{(1,-m+1)}_{m+j-1}+q^{(1,m+j+1)}_{-m-1} & =  
\frac{B_j}{j!}-\sum_{k=m+1}^{m+j-1} q^{(1,k+1)}_{-m-1} 
q^{(1,-k+1)}_{m+j-1}\nonumber\\
m\,q^{(1,-m+1)}_{m+j-1}+(m+j)\,q^{(1,m+j+1)}_{-m-1} & =  -\sum_{k=m+1}^{m+j-1} 
k\,q^{(1,k+1)}_{-m-1} q^{(1,-k+1)}_{m+j-1}
\end{align}
We can solve this system of linear equations and obtain
\begin{align}\label{21}
 \left[\begin{array}{c} q^{(1,-m+1)}_{m+j-1}\\ q^{(1,m+j+1)}_{-m-1}\\ 
\end{array} \right] & =  \frac{1}{j}\left[ \begin{array}{cc} m+j & -1\\ -m & 
1\\ \end{array}\right]\left[ \begin{array}{c} 
\frac{B_j}{j!}-\sum_{k=m+1}^{m+j-1} q^{(1,k+1)}_{-m-1} q^{(1,-k+1)}_{m+j-1}\\ 
-\sum_{k=m+1}^{m+j-1} k\,q^{(1,k+1)}_{-m-1} q^{(1,-k+1)}_{m+j-1}  
\end{array}\right].  
\end{align}
The second component of this equation is
\begin{equation}\label{22}
 q^{(1,m+j+1)}_{-m-1}=-\frac{1}{j}\left( \frac{B_j}{j!} m+\sum_{k=m+1}^{m+j-1} 
(k-m)\,q^{(1,k+1)}_{-m-1} q^{(1,-k+1)}_{m+j-1}\right). 
\end{equation}
First we replace $m$ with $n=m+j+1$ and obtain
\begin{equation}\label{23}
 q^{(1,n)}_{-n+j}=-\frac{1}{j}\left( \frac{B_j}{j!} (n-j-1)+\sum_{k=n-j}^{n-2} 
(k-n+j+1)\,q^{(1,k+1)}_{-n+j} q^{(1,-k+1)}_{n-2}\right).
\end{equation}
Finally, we re-index the sum by replacing $k$ with $i=k-n+j+1$ and obtain
\begin{equation}
 q^{(1,n)}_{-n+j}=-\frac{1}{j}\left( \frac{B_j}{j!} (n-j-1)+\sum_{i=1}^{j-1} 
i\,q^{(1,n+i-j)}_{-n+j} q^{(1,-n-i+j+2)}_{n-2}\right),
\end{equation}
which is the desired result.
\end{proof}
\begin{rema}
 We can use this recursive formula to calculate the first few terms of the 
series expansion of $e^x/(e^x-1)^n$:
\begin{align}\label{24}
 \frac{e^x}{(e^x-1)^n} & =  x^{-n}-\frac{1}{2} (n-2) 
x^{-n+1}+\frac{1}{8}(n-3)\left( n-\frac{4}{3}\right) x^{-n+2}\nonumber\\
&  -\frac{1}{48}(n-1)(n-2)(n-4) x^{-n+3}\nonumber\\
&  +\frac{1}{384}(n-5)\left( n^3-5n^2+\frac{22}{3}n-\frac{16}{5}\right) 
x^{-n+4} \nonumber\\
&  -\frac{1}{3840} (n-1)(n-2)(n-6)\left( n^2-\frac{13}{3} n+\frac{8}{3}\right) 
x^{-n+5}+\ldots
\end{align}
\end{rema}

Motivated by (\ref{24}), we prove:
\begin{propo}
 For any $n\in\mathbb{Z}$ and $j\geq 0$, $q^{(1,n)}_{-n+j}$ is a polynomial in 
$n$ of degree at most $j$. For $j>0$, this polynomial is divisible by $n-j-1$; 
when $j>1$ is odd, it is divisible by $n-1$; and when $j>0$ is odd, it is 
divisible by $n-2$.
\end{propo}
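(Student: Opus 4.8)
The plan is to run an induction on $j$ to establish the polynomial structure, and then to read off each of the three divisibility statements as the vanishing of that polynomial at a single integer value of $n$, via the factor theorem.

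For the polynomiality and degree bound I would set $P_j(n):=q^{(1,n)}_{-n+j}$ and induct on $j$, the base case $P_0(n)=q^{(1,n)}_{-n}=1$ being constant. For the inductive step I would rewrite the two $q$-factors appearing in the recursion of Theorem \ref{qrecursion} in the normalized form $q^{(1,N)}_{-N+J}=P_J(N)$. A direct index check gives $q^{(1,\,n+i-j)}_{-n+j}=P_i(n+i-j)$ and $q^{(1,\,-n-i+j+2)}_{n-2}=P_{j-i}(-n-i+j+2)$, which by the inductive hypothesis are polynomials in $n$ of degrees at most $i$ and $j-i$ respectively. Hence each summand of the recursion is a polynomial of degree at most $i+(j-i)=j$, while the term $\tfrac{B_j}{j!}(n-j-1)$ is linear and so of degree at most $j$ for $j\ge 1$. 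Thus $P_j(n)$ agrees, for every integer $n$, with a polynomial of degree at most $j$; since a polynomial is pinned down by its values at infinitely many points, this interpolating polynomial is unique, and it is its divisibility that I analyze below.

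Each divisibility claim now reduces to showing $P_j$ vanishes at one integer. For divisibility by $n-j-1$ I would evaluate at $n=j+1$, where $P_j(j+1)=q^{(1,j+1)}_{-1}$ is the coefficient of $x^{-1}$ in $e^x/(e^x-1)^{j+1}$. The key identity is that for $j\ge 1$,
\[
 \frac{e^x}{(e^x-1)^{j+1}}=-\frac{1}{j}\,\frac{d}{dx}\,(e^x-1)^{-j},
\]
and the coefficient of $x^{-1}$ in the derivative of any formal Laurent series is $0$; hence $P_j(j+1)=0$ and $(n-j-1)\mid P_j(n)$ by the factor theorem.

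For the remaining two claims I would evaluate at $n=1$ and $n=2$. At $n=1$ we have $P_j(1)=q^{(1,1)}_{j-1}=B_j/j!$, which vanishes for odd $j>1$ by the vanishing of the odd Bernoulli numbers recalled in the introduction, giving $(n-1)\mid P_j(n)$. At $n=2$, $P_j(2)=q^{(1,2)}_{j-2}$ is the coefficient of $x^{j-2}$ in $e^x/(e^x-1)^2$; the substitution $x\mapsto -x$ shows this function is even, so all its odd-power coefficients vanish, giving $P_j(2)=0$ for $j$ odd and hence $(n-2)\mid P_j(n)$. The only genuine hazard is the index matching in the inductive step, namely correctly recognizing the shifted $q$-symbols as $P_i$ and $P_{j-i}$ so that their degrees add to exactly $j$; once that bookkeeping is in place, the three divisibilities follow from single-point evaluations, the first via the residue-of-a-derivative identity and the last two via the symmetry and the known vanishing of the Bernoulli data.
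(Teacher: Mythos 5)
Your proposal is correct and follows essentially the same route as the paper's proof: induction on $j$ via the recursion of Theorem \ref{qrecursion} with exactly the same index identifications for the two $q$-factors, then the three divisibilities from the single-point evaluations $q^{(1,j+1)}_{-1}=0$ (residue of the derivative $-\frac{1}{j}(e^x-1)^{-j}$), $q^{(1,1)}_{j-1}=B_j/j!=0$ for odd $j>1$, and $q^{(1,2)}_{j-2}=0$ by evenness of $e^x/(e^x-1)^2$. Your added care in making the factor-theorem step explicit (uniqueness of the interpolating polynomial from agreement at infinitely many integers) is a point the paper leaves implicit, but it is the same argument.
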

\begin{proof}
 For $j=0$, the result is clear, and for $j=1$, (\ref{8}) gives 
$q^{(1,n)}_{-n+1}=-B_1 (n-2)$. It follows by induction on $j$ that 
$q^{(1,n)}_{-n+j}$ is a polynomial of degree at most $j$ since then for $1\leq 
i\leq j-1$, $q^{(1,n+i-j)}_{-n+j}$ is a polynomial of degree at most $i$ and 
$q^{(1,-n-i+j+2)}_{n-2}$ is a polynomial of degree at most $j-i$. To see why 
the 
polynomial
$q^{(1,n)}_{-n+j}$ is divisible by $n-j-1$ when $j>0$, observe that 
$q^{(1,j+1)}_{-1}$ is the residue of $e^x/(e^x-1)^{j+1}$, but 
$e^x/(e^x-1)^{j+1}$ is the derivative of $-(e^x-1)^{-j}/j$ so 
$q^{(1,j+1)}_{-1}=0$. When $j>1$ is odd, 
$q^{(1,1)}_{-1+j}=B_j/j!=0$, and for $j>0$ odd, $q^{(1,2)}_{-2+j}=0$ because 
$e^x/(e^x-1)^2$ is an even function.
\end{proof}

\begin{rema}
 One might consider trying to use commutation relations other than (\ref{6}) 
and 
(\ref{7}) to 
get information about $q^{(1,n)}_{-n+j}$, but these provide no additional 
information.
\end{rema}

\begin{rema}
 We could use commutation relations among elements of the weight $m$ subspace 
of 
$V$ to obtain information about $q^{(m,n)}_{-n+j}$ for $m>1$.
\end{rema}

\section{Bernoulli-type numbers and the Jacobi identity for modified vertex 
operators}

Suppose $V$ is a vertex operator operator and consider the modified vertex 
operators $X(u,x)$ defined in (\ref{homops}) which satisfy the Jacobi identity 
(\ref{XJacobi}). To obtain consequences of this identity, it is necessary to 
find the coefficients of monomials in the identity. In this section, we will use 
formal residues of the generating functions of Bernoulli-type numbers to 
calculate the coefficient in (\ref{XJacobi}) of $x_0^m$ for any 
$m\in\mathbb{Z}$, thus recovering formulas from \cite{FLM} for these 
coefficients. 

For an arbitrary formal series $g(y)$, we use the residue 
notation 
$\mathrm{Res}_y\,g(y)$ to denote the coefficient of $y^{-1}$ in $g(y)$. We will need the following lemma (see also Lemma 2.4 in \cite{SV}):
\begin{lemma}\label{residue}
  For any $m\in\mathbb{Z}$ and $n>0$,
\begin{equation}
 \mathrm{Res}_y\, \dfrac{e^{my}}{(e^y-1)^n}=  
 \binom{m-1}{n-1}
\end{equation}
\end{lemma}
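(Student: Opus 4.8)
The plan is to evaluate the residue by the formal substitution $t = e^y - 1$, which collapses the Bernoulli-type generating function into a single power of $t$ whose residue is an elementary binomial coefficient. The tool I would invoke is the invariance of the formal residue under a formal change of variable: if $g(y)\in\mathbb{C}((y))$ and $\phi(t)=a_1 t + a_2 t^2 + \cdots \in t\,\mathbb{C}[[t]]$ with $a_1\neq 0$, then $\mathrm{Res}_y\, g(y) = \mathrm{Res}_t\, g(\phi(t))\,\phi'(t)$. This is a standard fact whose proof reduces to the two cases $g(y)=y^{-1}$ and $g(y)=y^k$ for $k\neq -1$, using that $\mathrm{Res}_t\, h'(t)=0$ for every $h\in\mathbb{C}((t))$.

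Concretely, I would take $\phi(t)=\mathrm{log}(1+t)=t-\tfrac{1}{2}t^2+\cdots$, which has order $1$ (so that the composition $g\circ\phi$ is a well-defined Laurent series in $t$) and satisfies $e^{\phi(t)}-1=t$ and $\phi'(t)=(1+t)^{-1}$. With $g(y)=e^{my}/(e^y-1)^n$ this yields
\begin{equation*}
 \mathrm{Res}_y\, \frac{e^{my}}{(e^y-1)^n} = \mathrm{Res}_t\, \frac{(1+t)^m}{t^n}\cdot\frac{1}{1+t} = \mathrm{Res}_t\, \frac{(1+t)^{m-1}}{t^n}.
\end{equation*}
The right-hand side is the coefficient of $t^{n-1}$ in the binomial expansion of $(1+t)^{m-1}$, which is $\binom{m-1}{n-1}$, read for small or negative $m$ as the generalized binomial coefficient $(m-1)(m-2)\cdots(m-n+1)/(n-1)!$. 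Since $n>0$, the substitution and the coefficient extraction are legitimate for every $m\in\mathbb{Z}$.

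The only genuine subtlety, and hence the main step to get right, is the justification of the change-of-variable identity for formal Laurent series together with the well-definedness of $g(\phi(t))$; once the order and leading-coefficient conditions on $\phi$ are verified, the computation above is immediate. If a fully self-contained argument avoiding the substitution lemma were preferred, I would instead induct on $n$ directly from $\mathrm{Res}_y\, h'(y)=0$: differentiating $h(y)=e^{(m-1)y}/(e^y-1)^{n-1}$ gives $h'(y)=(m-1)\,e^{(m-1)y}/(e^y-1)^{n-1}-(n-1)\,e^{my}/(e^y-1)^n$, so taking residues produces the recursion $(n-1)R(m,n)=(m-1)R(m-1,n-1)$ for $R(m,n):=\mathrm{Res}_y\, e^{my}/(e^y-1)^n$. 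Together with the base case $R(m,1)=B_0(m)=1$ (the $j=0$ coefficient in the Bernoulli-polynomial generating function recalled in the introduction), this telescopes to $R(m,n)=\binom{m-1}{n-1}$.
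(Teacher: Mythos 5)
Your proposal is correct and follows essentially the same route as the paper: both apply the formal residue change-of-variable formula with the substitution $t=e^y-1$ (the paper writes it as $\mathrm{Res}_x\,g(x)=\mathrm{Res}_y\,g(x(y))x'(y)$ with $x(y)=e^y-1$, which is your computation read in the reverse direction) to reduce the residue to $\mathrm{Res}_t\,(1+t)^{m-1}/t^n=\binom{m-1}{n-1}$. Your alternative self-contained induction via $\mathrm{Res}_y\,h'(y)=0$ is also valid, but the primary argument matches the paper's.
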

\begin{proof}
  We use the formal residue change of variable formula
 \begin{equation}\label{changeofvariable}
 \mathrm{Res}_{x}\, g(x)=\mathrm{Res}_y \,g(x(y))x'(y)
\end{equation}
with $x(y)=e^y-1$ to obtain
\begin{align}
 \mathrm{Res}_y\,\dfrac{e^{my}}{(e^y-1)^n} & 
=\mathrm{Res}_y\,\dfrac{e^y(1+(e^y-1))^{m-1}}{(e^y-1)^n}=\mathrm{Res}_x\,\dfrac{
(1+x)^{m-1}}{x^n}\nonumber\\
 &=\mathrm{Res}_x \sum_{k\geq 0}\binom{m-1}{k} 
x^{k-n}=\binom{m-1}{n-1}.
\end{align}
\end{proof}

With this formula for residues of the generating functions of 
Bernoulli-type numbers, we can derive the following formula for the coefficient 
of $x_0^{-n-1}$ in (\ref{XJacobi}):
\begin{propo}
 For any $n\in\mathbb{Z}$ and $u,v\in V$ with $u$ homogeneous,
 \begin{align}\label{gencomm}
  (x_1-x_2)^n X(u,x_1)X(v,x_2) & -(-x_2+x_1)^n X(v,x_2)X(u,x_1)\nonumber\\
  & =  \sum_{j\in\mathbb{Z},\,k\geq n}\binom{\mathrm{wt}\,u-j-1}{k-n} x_1^j 
x_2^{n-j} X(u_k v,x_2).
 \end{align}
\end{propo}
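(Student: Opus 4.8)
The plan is to read off (\ref{gencomm}) as the coefficient of $x_0^{-n-1}$ on each side of the Jacobi identity (\ref{XJacobi}); equivalently, I would apply the functional $g\mapsto\mathrm{Res}_{x_0}\,x_0^n\,g$ to both sides. The left-hand side is immediate: expanding $x_0^{-1}\delta\!\left(\frac{x_1-x_2}{x_0}\right)=\sum_{m\in\mathbb{Z}}(x_1-x_2)^m x_0^{-m-1}$ and likewise for $-x_2+x_1$, the coefficient of $x_0^{-n-1}$ selects $m=n$ and reproduces exactly the left-hand side of (\ref{gencomm}).

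The content is on the right-hand side, where the $x_0$-dependence enters only through $y=\mathrm{log}(1-x_0/x_1)$, i.e. $x_0=-x_1(e^y-1)$ with $x_0'(y)=-x_1 e^y$. I would therefore compute the coefficient of $x_0^{-n-1}$ by the formal residue change-of-variable formula (\ref{changeofvariable}), converting $\mathrm{Res}_{x_0}$ into $\mathrm{Res}_y$; this replaces $x_0^n$ by $(-x_1)^n(e^y-1)^n$ and introduces the Jacobian factor $-x_1 e^y$, turning the target into
\[
\mathrm{Res}_y\,(-x_1)^{n+1}(e^y-1)^n e^y\,x_2^{-1}\delta\!\left(e^y\tfrac{x_1}{x_2}\right)X(Y_f(u,-y)v,x_2).
\]

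Next I would substitute the expansions $Y_f(u,-y)v=e^{-y\,\mathrm{wt}\,u}\sum_{k\in\mathbb{Z}}u_k v\,(e^{-y}-1)^{-k-1}$ and $x_2^{-1}\delta(e^y x_1/x_2)=\sum_{p\in\mathbb{Z}}e^{py}x_1^p x_2^{-p-1}$, pull the $y$-free operators $X(u_k v,x_2)$ outside the residue, and use the identity $(e^{-y}-1)^{-k-1}=(-1)^{k+1}e^{(k+1)y}(e^y-1)^{-k-1}$ to collect all $y$-dependence into a single factor $(-1)^{k+1}e^{My}/(e^y-1)^N$ with $M=p+k+2-\mathrm{wt}\,u$ and $N=k+1-n$. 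Lemma \ref{residue} then evaluates the residue as $\binom{M-1}{N-1}=\binom{p+k+1-\mathrm{wt}\,u}{k-n}$ whenever $N>0$, that is $k\geq n$; when $k<n$ the factor $(e^y-1)^{n-k-1}e^{My}$ is an ordinary power series in $y$ with no $y^{-1}$ term, so the residue vanishes, and this is exactly what cuts the sum down to $k\geq n$.

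It then remains to match indices and signs. Re-indexing by $j=n+1+p$ gives $x_1^{n+1+p}x_2^{-p-1}=x_1^j x_2^{n-j}$ and turns the binomial into $\binom{k-\mathrm{wt}\,u+j-n}{k-n}$; applying the upper-negation identity $\binom{r}{s}=(-1)^s\binom{s-r-1}{s}$ rewrites this as $(-1)^{k-n}\binom{\mathrm{wt}\,u-j-1}{k-n}$, and the sign $(-1)^{k-n}$ cancels against the accumulated overall sign $(-1)^{n+k}$ (from $(-x_1)^{n+1}$ together with the $(-1)^{k+1}$ above), leaving $\sum_{j\in\mathbb{Z},\,k\geq n}\binom{\mathrm{wt}\,u-j-1}{k-n}x_1^j x_2^{n-j}X(u_k v,x_2)$ as required. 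I expect the main obstacle to be purely bookkeeping: justifying the change-of-variable formula in the presence of the delta function and tracking the several shift-and-sign conventions so that the powers of $e^y$ assemble into precisely the generating function of Lemma \ref{residue} and the upper-negation supplies the sign matching (\ref{gencomm}).
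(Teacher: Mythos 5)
Your proposal is correct, and I verified the bookkeeping: the exponent data come out to $M=p+k+2-\mathrm{wt}\,u$, $N=k+1-n$, and the net sign $(-1)^{n+1}\cdot(-1)^{k+1}\cdot(-1)^{k-n}=1$, so your computation reproduces (\ref{gencomm}) exactly, including the truncation to $k\geq n$ via vanishing of the residue when $n-k-1\geq 0$. The strategy is the same as the paper's — extract the coefficient of $x_0^{-n-1}$ with the formal residue change-of-variable formula (\ref{changeofvariable}) and evaluate via Lemma \ref{residue} — but the execution genuinely differs in one respect. The paper first rewrites the right-hand side of (\ref{XJacobi}) using the $\delta$-function substitution properties of \cite{LL}, trading $y'=\mathrm{log}\left(1-\frac{x_0}{x_1}\right)$ for $y=\mathrm{log}\left(1+\frac{x_0}{x_2}\right)$, i.e.\ substituting at $x_0=x_2(e^y-1)$ with Jacobian $x_2e^y$; after this the $y$-dependent factor is literally $e^{(\mathrm{wt}\,u-j)y}/(e^y-1)^{k-n+1}$, and Lemma \ref{residue} hands over $\binom{\mathrm{wt}\,u-j-1}{k-n}$ directly, with no signs to track and no need for any binomial identity. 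You instead substitute directly at $x_0=-x_1(e^y-1)$, which forces the conversion $(e^{-y}-1)^{-k-1}=(-1)^{k+1}e^{(k+1)y}(e^y-1)^{-k-1}$ and then the upper-negation identity $\binom{r}{s}=(-1)^s\binom{s-r-1}{s}$ to undo the accumulated sign — all of which you carry out correctly. What each buys: the paper's preliminary substitution makes the residue computation sign-free at the cost of invoking the $\delta$-function substitution machinery; your route avoids that machinery entirely but pays in sign and index bookkeeping (note also that in your version the Jacobian contributes $x_1^{n+1}$ where the paper's contributes $x_2^{n+1}$, yet both land on $x_1^jx_2^{n-j}$). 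The caveat you flag — justifying (\ref{changeofvariable}) in the presence of the $\delta$-function — is equally present in the paper's version and is harmless for the usual reasons: $u_kv=0$ for $k$ sufficiently large bounds the pole order in $y$, and for each fixed power of $x_1$ only one summand $p$ of the $\delta$-function contributes, so the residue commutes with the sums.
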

\begin{proof}
 We recall that the Jacobi identity \cite{L2} for modified vertex operators is 
given by
 \begin{align}\label{XJacobi2}
 x_0^{-1}\delta\left( \frac{x_1-x_2}{x_0}\right) X(u,x_1)X(v,x_2) & -  
x_0^{-1}\delta\left( \frac{-x_2+x_1}{x_0}\right) X(v,x_2)X(u,x_1)\nonumber\\
& =  x_2^{-1}\delta\left( e^{y'} \frac{x_1}{x_2}\right) 
X(Y_f(u,-y')v,x_2),
\end{align}
where $y'=\mathrm{log}\left(1-\frac{x_0}{x_1}\right)$, and so
\begin{equation} 
-y'=-\mathrm{log}\left(1-\dfrac{x_0}{x_1}\right)=\mathrm{log}\left(1-\dfrac{x_0}
{x_1}\right)^{-1}=\mathrm{log}\left(\dfrac{x_1}{x_1-x_0}\right).
\end{equation}
The right side of (\ref{XJacobi2}) can be rewritten using $\delta$-function 
substitution properties (see \cite{LL}):
\begin{align}
& 
x_2^{-1}\delta\left(\dfrac{x_1-x_0}{x_2}\right)X\left(Y_f\left(u,\mathrm{log}
\left(\dfrac{x_1}{x_1-x_0}\right)\right)v,x_2\right)=\nonumber\\
& 
x_1^{-1}\delta\left(\dfrac{x_2+x_0}{x_1}\right)X\left(Y_f\left(u,\mathrm{log}
\left(\dfrac{x_2+x_0}{x_2}\right)\right)v,x_2\right)=\nonumber\\
& x_1^{-1}\delta\left(e^y\dfrac{x_2}{x_1}\right)X(Y_f(u,y)v,x_2)
\end{align}
where $y=\mathrm{log}\left(1+\frac{x_0}{x_2}\right)$. Then for any 
$n\in\mathbb{Z}$, the coefficient of $x_0^{-n-1}$ in (\ref{XJacobi2}) is
\begin{align}
 (x_1-x_2)^n X(u,x_1) & X(v,x_2)-(-x_2+x_1)^n X(v,x_2)X(u,x_1)\nonumber\\
 & =\mathrm{Res}_{x_0}\,x_0^n  
x_1^{-1}\delta\left(e^y\dfrac{x_2}{x_1}\right)X(Y_f(u,y)v,x_2)\nonumber\\
 & =\mathrm{Res}_y\,x_2^{n+1} e^y (e^y-1)^n 
x_1^{-1}\delta\left(e^y\dfrac{x_2}{x_1}\right)X(Y_f(u,y)v,x_2)\nonumber\\
 & =\sum_{j,k\in\mathbb{Z}} x_1^{-j-1} x_2^{n+j+1} X(u_k 
v,x_2)\,\mathrm{Res}_y\,\dfrac{e^{(\mathrm{wt}\,u+j+1)y}}{(e^y-1)^{k-n+1}}
\nonumber\\
 & =\sum_{j\in\mathbb{Z},\,k\geq n} x_1^j x_2^{n-j}X(u_k 
v,x_2)\,\mathrm{Res}_y\,\dfrac{e^{(\mathrm{wt}\,u-j)y}}{(e^y-1)^{k-n+1}}
\nonumber\\
 & = \sum_{j\in\mathbb{Z},\,k\geq n}\binom{\mathrm{wt}\,u-j-1}{k-n} x_1^j 
x_2^{n-j} X(u_k v,x_2),
\end{align}
where we have used (\ref{changeofvariable}) with $x_0(y)=x_2(e^y-1)$ and Lemma 
\ref{residue}.
\end{proof}

By taking $n=0$ in (\ref{gencomm}), we obtain a commutator formula for the 
modified vertex operators:
\begin{corol}
 For $u,v\in V$ with $u$ homogeneous,
 \begin{equation}\label{comm}
  [X(u,x_1),X(v,x_2)]=\sum_{j\in\mathbb{Z},\,k\geq 
0}\binom{\mathrm{wt}\,u-j-1}{k}\left(\dfrac{x_1}{x_2}\right)^j X(u_kv,x_2).
 \end{equation}
\end{corol}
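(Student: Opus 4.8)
The plan is to specialize the formula (\ref{gencomm}) of the preceding Proposition to the single value $n=0$ and then simplify both sides; no new machinery should be required, since the Corollary is precisely this specialization. Thus the first thing I would do is write down (\ref{gencomm}) with $n$ replaced by $0$ throughout and examine the resulting left- and right-hand sides separately.

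On the left side, the two prefactors become $(x_1-x_2)^0$ and $(-x_2+x_1)^0$. The point I would emphasize is that, because the exponent $0$ is a nonnegative integer, there is no expansion-convention subtlety here: both expressions are literally the constant formal series $1$, unlike the case of negative or non-integer exponents where the order of the two variables would matter in the binomial expansion. Hence the left side collapses to $X(u,x_1)X(v,x_2)-X(v,x_2)X(u,x_1)$, which is exactly the commutator $[X(u,x_1),X(v,x_2)]$ appearing in (\ref{comm}).

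On the right side, setting $n=0$ replaces the binomial lower index $k-n$ by $k$, giving $\binom{\mathrm{wt}\,u-j-1}{k}$, and replaces the monomial $x_1^j x_2^{n-j}$ by $x_1^j x_2^{-j}=(x_1/x_2)^j$. The summation constraint $k\geq n$ becomes $k\geq 0$, matching the claimed range of summation exactly. Collecting these observations reproduces the right-hand side of (\ref{comm}) verbatim.

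Since the derivation is a direct substitution into an already-established identity, there is essentially no hard step; the only point deserving genuine care is the formal-variable bookkeeping on the left side, namely confirming that both zeroth powers really are the constant $1$ so that the antisymmetrized product is genuinely the commutator rather than some formally distinct expression. Once that is verified, the two sides match term by term and the proof is complete.
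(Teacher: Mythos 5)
Your proposal is correct and matches the paper exactly: the paper derives the corollary precisely by setting $n=0$ in (\ref{gencomm}), and your observation that $(x_1-x_2)^0=(-x_2+x_1)^0=1$ without any expansion-convention subtlety is the only point needing care.
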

\begin{rema}
 By taking coefficients of powers of $x_1$ and $x_2$ in (\ref{gencomm}) and 
(\ref{comm}), we recover formulas (8.8.43) and (8.6.32) of \cite{FLM}, 
respectively.
\end{rema}

\noindent {\small \sc Department of Mathematics, Rutgers University,
110 Frelinghuysen Rd., Piscataway, NJ 08854-8019}
\vspace{1em}

\noindent {\em E-mail address}: rhmcrae@math.rutgers.edu


\begin{thebibliography}{DLM}

\bibitem[BHL]{BHL}
K. Barron, Y.-Z. Huang, and J. Lepowsky, Factorization of formal exponentials 
and uniformization, \textit{J. Algebra}  \textbf{228}  (2000),  no. 2, 551-579.

\bibitem[DLM]{DLM}
B. Doyon, J. Lepowsky, and A. Milas, Twisted vertex operators and Bernoulli polynomials, \textit{Commun. Contemp. Math.}  \textbf{8}  (2006), 247–-307.

\bibitem[FLM]{FLM}
I. Frenkel, J. Lepowsky, and A. Meurman, \textit{Vertex operator algebras and 
the Monster}, Pure and Appl. Math., \textbf{134}, Academic Press, New York, 
1988.

\bibitem[H]{H}
Y.-Z. Huang, \textit{Two-dimensional conformal geometry and vertex operator 
algebras}, Progress in Math., Vol. 148, Birkh\"auser, Boston, 1997.

\bibitem[IR]{IR}
K. Ireland and M. Rosen, \textit{A Classical Introduction to Modern Number 
Theory}, 2nd ed., Graduate Texts in Mathematics, Vol. 84, Springer-Verlag, New 
York, 1990.

\bibitem[L1]{L}
J. Lepowsky, Remarks on vertex operator algebras and moonshine, in: 
\textit{Proc. 20th International Conference on Differential Geometric Methods 
in 
Theoretical Physics, New York, 1991}, ed. by S. Catto and A. Rocha, World 
Scientific, Singapore, 1992, 361-370.

\bibitem[L2]{L2}
J. Lepowsky, Application of a ``Jacobi identity'' for vertex operator algebras 
to zeta values and differential operators, \textit{Lett. Math. Phys.} 
\textbf{53} (2000), 87-103.

\bibitem[LL]{LL} 
J. Lepowsky and H. Li, \textit{Introduction to Vertex
Operator Algebras and Their Representations}, Progress in Math.,
Vol. 227, Birkh\"auser, Boston, 2003.

\bibitem[M]{M}
A. Milas, Formal differential operators, vertex operator algebras and zeta-values, I, \textit{J. Pure Appl. Algebra}  \textbf{183}  (2003), 129–-190.

\bibitem[SV]{SV}
A. Szenes and M. Vergne, Residue formulae for vector partitions and Euler--MacLaurin sums, \textit{Adv. in Appl. Math.}  \textbf{30}  (2003), 295–-342.

\bibitem[Z]{Z}
Y. Zhu, Modular invariance of characters of vertex operator algebras, 
\textit{J. 
Amer. Math. Soc.} \textbf{9} (1996), 237-307.
 ×
\end{thebibliography}
\end{document}